\documentclass[a4paper]{amsart}

\usepackage[latin1]{inputenc}
\usepackage{amsmath, amsthm, amssymb}

\usepackage[colorlinks]{hyperref}

\newtheorem{problem}{Problem}
\newtheorem{theorem}{Theorem}
\newtheorem{lemma}{Lemma}
\newtheorem{corollary}{Corollary}

\newcommand{\C}{\mathbb{C}}
\newcommand{\R}{\mathbb{R}}
\newcommand{\T}{\mathbb{T}}
\newcommand{\Z}{\mathbb{Z}}
\newcommand{\Zn}{\Z_n}
\newcommand{\dZn}{\widehat{\Z}_n}

\newcommand{\Kcal}{\mathcal{K}}
\newcommand{\Pcal}{\mathcal{P}}

\newcommand{\defi}[1]{\textit{#1}}

\newcommand{\tp}{{\sf T}}

\DeclareMathOperator{\so}{SO}
\DeclareMathOperator{\vol}{vol}
\DeclareMathOperator{\motion}{M}
\DeclareMathOperator{\cayley}{Cayley}




\begin{document}

\title{Mathematical optimization for packing problems}

\author{Fernando M\'ario de Oliveira Filho}
\address{F.M.~de Oliveira Filho, Instituto de Matem\'atica e
  Estat\'istica, Universidade de S\~ao Paulo, Rua do Mat\~ao 1010,
  05508-090 S\~ao Paulo/SP, Brazil}
\email{fmario@gmail.com}

\author{Frank Vallentin} 
\address{F.~Vallentin, Mathematisches Institut, Universit\"at zu K\"oln, Weyertal 86--90, 50931 K\"oln, Germany}
\email{frank.vallentin@uni-koeln.de}

\date{November 15, 2015}

\maketitle

\section{Introduction}

During the last few years several new results on packing problems were
obtained using a blend of tools from semidefinite optimization,
polynomial optimization, and harmonic analysis. Schrijver
\cite{Schrijver2005a} used semidefinite optimization and the
Terwilliger algebra to obtain new upper bounds for binary codes,
Bachoc and Vallentin \cite{Bachoc2008a} used semidefinite optimization
and spherical harmonics for spherical codes, and Cohn and
Elkies~\cite{Cohn2003a} used linear optimization and Fourier analysis for
sphere packings leading to the breakthrough result of Cohn and Kumar
\cite{Cohn2009a}, who proved that the Leech lattice in dimension $24$
gives the best lattice sphere packing in its dimension. De Laat,
Oliveira and Vallentin \cite{Laat2014a} generalized the approach of
Cohn and Elkies to provide upper bounds for maximal densities of
packings of spheres having different radii. The most recent extension
is by Oliveira and Vallentin \cite{Oliveira2013a}, providing new upper
bounds for the density of packings of congruent copies of a given
convex body.

Typical in all this work is the use of semidefinite optimization and
harmonic analysis which gives newcomers to the field~---~often
overwhelmed with technical details~---~a hard time. Also typical is
that the computational challenge grows dramatically if one goes from
compact spaces, like binary Hamming space or the sphere, to
non-compact spaces like the Euclidean space.

Our goal in this paper is to provide an introduction to this topic in
an attempt to paint the big picture without loosing essential detail.
The paper is however not meant as a survey on results about geometric
packing problems --- this task would easily fill books! For a first
orientation we refer the interested reader to the now classical book
by Conway and Sloane \cite{Conway1988a}.

\section{Some history}
\label{sec:some history}

The \emph{sphere packing problem} asks: How much of three-dimensional
space can be filled with pairwise nonoverlapping translates of unit
spheres?  It was considered by Johannes Kepler (1571--1630) in his
work \textit{Strena seu de Nive Sexangula} (On the Six-Cornered
Snowflake) from 1611, which was his New Year's gift to his friend and
supporter Johann Matthäus Wacker von Wackenfels (1550--1619). He
explains the formation of snowflakes into crystals having sixfold
symmetry by drawing an analogy to dense sphere packings which possess
the same kind of symmetry. The general acceptance of atomism was yet
to come, so this explanation was a remarkable achievement. Kepler's
work is the first scientific writing about crystal formation; in it he
claims (essentially without any justification) that a specific
periodic structure, the face-centered cubic lattice, describes the
densest sphere packing having density $\pi/\sqrt{18} = 0.74\dots$. This
claim is now called Kepler's conjecture.

In 1998 Thomas Hales proved Kepler's conjecture; his proof makes heavy
use of computers and in 2009 he, together with his student Samuel
P.~Ferguson, was rewarded the Fulkerson Prize for his work.

The sphere packing problem, and more generally the problem of packing
copies of a given body, was also considered by David Hilbert. He
mentions it as part of his 18th problem:

\begin{quote}
18. Building up of Space from Congruent Polyhedra
\medskip

\noindent
(\dots) I point out the following question, related to the preceding
one, and important to number theory and perhaps sometimes useful to
physics and chemistry: How can one arrange most densely in space an
infinite number of equal solids of given form, e.g., spheres
with given radii or regular tetrahedra with given edges (or in
prescribed position), that is, how can one so fit them together that
the ratio of the filled to the unfilled space may be as great as
possible?
\end{quote}

The problem of packing congruent copies of regular tetrahedra,
mentioned by Hilbert, goes back to Aristotle's (384--322 BC)
refutation of a theory of Plato (428--348 BC), presented in the
\textit{Timaeus}, that claimed that each of the four elements had a
specific shape, namely one of the Platonic solids, and that the
properties of each element derived from its shape. So, for instance,
earth, the most stable and plastic element, is cubic in shape, and
fire, the acutest and most penetrating, has the shape of a
tetrahedron.

Aristotle presents several arguments against this theory in his
treatise \textit{De Caelo}. In one of his arguments (cf.~\textit{De
  Caelo}, Book~III, Chapter~VIII), he claims that it is irrational to
assign geometrical shapes to the four elements, since not all of space
can be thus filled. Indeed, says Aristotle, only the cube and the
pyramid (i.e., the regular tetrahedron) can fill space. So Aristotle's
argument uses the idea of the impossibility of a vacuum, together with
the fact that only two of the solids (corresponding to earth and fire)
can fill the whole of space, to refute Plato's theory.

Aristotle's claim that one can tile space with tetrahedra was picked
up by many of his commentators. Simplicius of Cilicia
(c.~490--c.~560), one of the main commentators of Aristotle in late
Antiquity, even states that, as eight cubes are sufficient to fill the
space around a given point, so are twelve regular tetrahedra (cf.\
page~42 in the translation by Mueller~\cite{Mueller}).

In the Middle Ages, Aristotle's Arabic commentator, Averroës
(1126--1198), restates the claim that twelve pyramids fill the space
around a point and gives an argument for it. Three planes meet at the
vertex of a cube, forming a so-called ``solid angle'' composed of
three right angles. Eight cubes fill the space around a point in
three-dimensional space, and these eight solid angles add up to a
total of~$8 \times 3$ right angles. Now, a solid angle of a
tetrahedron is composed of three angles of~$60^\circ$ each, totaling
two right angles. Since one needs~$8 \times 3$ right angles to fill
the space around a point (as can be seen from the cubes), and since~$8
\times 3 = 12 \times 2$, it follows that twelve tetrahedra fill the
space around a point.

Averroës' commentary introduced the problem to the medieval
schoolmen. Roger Bacon (c.~1214--1294) defended Averroës' position
against the claim that not twelve, but twenty tetrahedra are needed to
fill the space around a point. Thomas Bradwardine (c.~1290--1349)
disproved Averroës' claim with a very simple argument: If indeed it
would be possible to place twelve regular tetrahedra around a point in
such a way that no empty space results, then in addition to the five
Platonic solids, one would have another one, what is
impossible. According to him, those who argue that twenty tetrahedra
can be placed around a point have therefore a stronger position, since
one can obtain twenty pyramids by joining the bases of a regular
icosahedron to its center. Bradwardine observes that it is still
necessary to check whether the pyramids so obtained are regular or
not, but he leaves the question open (these pyramids are, as can be
shown using the construction of the icosahedron given in the
thirteenth book of Euclid's Elements, not regular).

The question was finally settled, it is believed, by Johannes Müller
von Königsberg (1436--1476), known as Regiomontanus, who proved that
it is impossible to tile space with regular tetrahedra.  Of
Regiomontanus' manuscript only the title, describing the contents of
the work, has been preserved, but there is no doubt he had all the
tools at his disposal to settle the problem.  Francesco Maurolico
(1494--1575) computed the angle between two faces of a regular
tetrahedron. This angle, equal to~$\arccos(1/3) \approx
70.52877^\circ$, is greater than~$60^\circ$ and smaller
than~$72^\circ$, and hence it follows that one cannot tile space with
tetrahedra. Maurolico's work has been recently rediscovered (see
Addabbo~\cite{Addabbo}). For more on the fascinating history of the
tetrahedra packing problem, including all the details presented here,
see the historical survey by Struik~\cite{Struik} and the survey by
Lagarias and Zong~\cite{Lagarias2012a}.

If one cannot tile space with regular tetrahedra, how much of space
can be filled with them?  Even today, the problem is far from being
solved. In~2006, Conway and Torquato~\cite{ConwayT} found surprisingly
dense packings of tetrahedra. This sparked renewed interest in the
problem and a race for the best construction (cf.~Lagarias and
Zong~\cite{Lagarias2012a} and Ziegler~\cite{Ziegler2011a}). The
current record is held by Chen, Engel, and Glotzer~\cite{ChenEG}, who
found in 2010 a packing with density~$\approx 0.8563$, a much larger
fraction of space than that which can be covered by spheres. This
prompted the quest for upper bounds; the current record rests with
Gravel, Elser, and Kallus~\cite{Gravel2011a}, who proved an upper
bound of~$1 - 2.6\ldots \cdot 10^{-25}$. They are themselves convinced
that the bound can be greatly improved: \medbreak

\begin{quote}
  In fact, we conjecture that the optimal packing density corresponds
  to a value of~$\delta$ [the fraction of empty space] many orders of
  magnitude larger than the one presented here. We propose as a
  challenge the task of finding an upper bound with a significantly
  larger value of~$\delta$ (e.g.,~$\delta > 0.01$) and the development
  of practical computational methods for establishing informative
  upper bounds.
\end{quote}

\section{Mathematical Modeling}
\label{sec:mathematical modeling}

How can one model mathematically the problem of packing spheres
or regular tetrahedra in~$\R^3$? Packing problems are
optimization problems and can be seen as infinite analogues of a
well-known problem in combinatorial optimization, namely the problem
of finding a maximum-weight independent set in a graph.
To see this, let us first consider two kinds of packing problems.

\begin{problem}[Translational body packings]
\label{prob:trans}
Given convex bodies~$\Kcal_1$, \dots,~$\Kcal_N \subseteq \R^n$, how
much of~$\R^n$ can be filled with pairwise nonoverlapping translated
copies of~$\Kcal_1$, \dots,~$\Kcal_N$?
\end{problem}

The sphere packing problem is then obtained by taking~$N = 1$ and
letting~$\Kcal_1$ be the unit ball.

\begin{problem}[Congruent body packings]
\label{prob:cong}
Given a convex body~$\Kcal \subseteq \R^n$, how much of~$\R^n$ can be
filled with pairwise nonoverlapping congruent (i.e., translated and
rotated) copies of~$\Kcal$?
\end{problem}

Here letting~$\Kcal$ be the unit ball gives the sphere packing
problem, and letting~$\Kcal$ be the regular tetrahedron gives the
tetrahedra packing problem. In a sense, Problem~\ref{prob:cong} is a
limiting case of Problem~\ref{prob:trans}: Given a convex body $\Kcal$
one tries to pack translative copies of infinitely many
rotations~$A\Kcal$ of~$\Kcal$, where $A \in \so(n)$ and~$\so(n)$
is the special orthogonal group of~$\R^n$ (i.e., the group of all
orthogonal $n \times n$ matrices with determinant~$1$).

We call a union of nonoverlapping (congruent or translated) copies of
some bodies a \defi{packing} of these bodies. In a packing bodies are
allowed to touch on their boundaries but not to intersect in their
interiors. The fraction of space covered by a packing is its
\defi{density}, so our goal is to find the maximum density of
packings. Here we are using an informal definition of density; later
in Section~\ref{sec:upper bounds} we will give a precise definition.

Let~$G = (V, E)$ be a graph, finite or infinite. An \defi{independent
  set} is a set~$I \subseteq V$ that does not contain adjacent
vertices. Packings of bodies can be seen as independent sets in some specially
defined graphs called packing graphs. Given convex bodies~$\Kcal_1$,
\dots,~$\Kcal_N \subseteq \R^n$, the \defi{translational packing
  graph} of~$\Kcal_1$, \dots,~$\Kcal_N$ is the graph~$G$ whose vertex
set is~$\{1, \ldots, N\} \times \R^n$. The vertices of~$G$ correspond
to possible choices of bodies in the packing: vertex~$(i, x)$
corresponds to placing the body~$x + \Kcal_i$ in the packing. This
interpretation defines the adjacency relation of~$G$: vertices~$(i,
x)$ and~$(j, y)$ are adjacent if the corresponding bodies overlap,
i.e., if
\[
(x + \Kcal_i)^\circ \cap (y + \Kcal_j)^\circ \neq \emptyset,
\]
where~$A^\circ$ is the interior of set~$A$.  So independent sets
of~$G$ correspond to packings of translated copies of~$\Kcal_1$,
\dots,~$\Kcal_N$ and vice versa.

A similar idea can be used regarding packings of congruent copies of a
given convex body~$\Kcal$. Given such a body, we consider its
\defi{congruent packing graph}, which is the graph~$G$ whose vertex
set is~$\so(n) \times \R^n$. The elements of~$\so(n)$
correspond to the possible rotations of~$\Kcal$, so that a vertex~$(A,
x)$ of~$G$ corresponds to placing the body~$x + A\Kcal$ in the
packing. Again, this gives the adjacency relation of~$G$:
vertices~$(A, x)$ and~$(B, y)$ are adjacent if
\[
(x + A\Kcal)^\circ \cap (y + B\Kcal)^\circ \neq \emptyset.
\]
With this, independent sets of~$G$ correspond to packings of congruent
copies of~$\Kcal$ and vice versa.

Packings therefore correspond to independent sets of the packing
graph. If we measure the weight of an independent set by the density of
the associated packing, then Problems~\ref{prob:trans}
and~\ref{prob:cong} ask us to find maximum-weight independent sets in
the corresponding packing graphs.

Does this modeling help? Finding a maximum cardinality independent
set in a finite graph is a well-known NP-hard problem, figuring in
Karp's list of~21 problems. Many techniques have been developed in
combinatorial optimization to deal with hard problems: the basic
approach is that one tries to develop efficient methods to find lower
and upper bounds. In the case of the maximum-cardinality independent
set problem, lower bounds are constructive and come from heuristics
that try to find independent sets of large size. Analogously, for
packing problems one has the adaptive shrinking cell scheme of
Torquato and Jiao~\cite{Torquato2009a}, which can successfully generate
dense packings. 

As for upper bounds, Lovász~\cite{Lovasz1979a} introduced a graph
parameter, the theta number, that provides an upper bound for the
maximum cardinality of independent sets of a finite graph; Lovász's
theta number can moreover be computed efficiently using semidefinite
optimization. The most successful approaches to obtain upper bounds
for the maximum densities of packings all use extensions of the theta
number. The theta number can be quite naturally extended to graphs
having compact vertex sets, as we show in
Section~\ref{sec:lovasz-ext}; still, this extension cannot be applied
to the packing graphs we described above, because they have noncompact
vertex sets. These graphs can be compactified, however, as we discuss
in Section~\ref{sec:upper bounds}, and then the extension of the theta
number can be applied.

\section{The Lovász theta number and an extension}
\label{sec:lovasz-ext}

The \defi{independence number} of a graph~$G = (V, E)$ (finite or
infinite) is the graph parameter
\[
\alpha(G) = \max\{\, |I| : \text{$I$ is independent}\,\}.
\]
Given a nonnegative weight function~$w\colon V \to \R_+$, one may also
define the \defi{weighted independence number} of~$G$ as
\[
\alpha_w(G) = \max\{\, w(I) : \text{$I$ is independent}\,\},
\]
where~$w(I) = \sum_{x \in I} w(x)$. Weights will be useful in packing
problems because, when we want to pack different kinds of bodies, like
spheres having different radii, the weight function allows us to
distinguish between big and small bodies.

The theta number introduced by Lovász~\cite{Lovasz1979a} provides an
upper bound to the independence number of a graph. It was later
strengthened and extended to the weighted case by Grötschel, Lovász,
and Schrijver~\cite{Groetschel1981a}.  There are many equivalent ways
of defining their graph parameter; the one most convenient for us is
the following. Given a finite graph~$G = (V, E)$ and a weight
function~$w\colon V \to \R_+$, we define
\begin{equation}
\label{eq:theta}
\begin{array}{rll}
\vartheta'_w(G) = \min&M\\
&K(x, x) \leq M&\text{for all~$x \in V$},\\
&K(x, y) \leq 0&\text{for all~$\{x, y\} \notin E$ with~$x \neq y$},\\
&\multicolumn{2}{l}{\text{$K \in \R^{V \times V}$ is symmetric},}\\
&\multicolumn{2}{l}{\text{$K - (w^{1/2})(w^{1/2})^\tp$ is positive semidefinite},}
\end{array}
\end{equation}
where~$w^{1/2} \in \R^V$ is such that~$w^{1/2}(x) = w(x)^{1/2}$.

\begin{theorem}
\label{thm:theta}
Let~$G = (V, E)$ be a finite graph and~$w\colon V \to \R_+$ be a
weight function. Then~$\alpha_w(G) \leq \vartheta'_w(G)$.
\end{theorem}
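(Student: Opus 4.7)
The plan is a standard weak-duality style argument: take an arbitrary independent set $I \subseteq V$ and an arbitrary feasible solution $(K, M)$ of the minimization program defining $\vartheta'_w(G)$, and exhibit a single test vector $u \in \R^V$ that, when plugged into both the PSD condition and the inequalities on the entries of $K$, squeezes $w(I)$ between two expressions involving $M$.

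The natural test vector is the ``weighted indicator'' of $I$, namely
\[
u(x) = \begin{cases} w(x)^{1/2} & \text{if } x \in I, \\ 0 & \text{otherwise.} \end{cases}
\]
First I would use the PSD constraint $K - (w^{1/2})(w^{1/2})^\tp \succeq 0$ by evaluating the associated quadratic form at $u$. Since $u^\tp w^{1/2} = \sum_{x \in I} w(x) = w(I)$, this yields the lower bound
\[
u^\tp K u \;\geq\; (u^\tp w^{1/2})^2 \;=\; w(I)^2.
\]
Next I would bound $u^\tp K u$ from above by splitting into diagonal and off-diagonal contributions. The diagonal part equals $\sum_{x \in I} w(x) K(x,x) \leq M \, w(I)$ by the constraint $K(x,x) \leq M$. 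For the off-diagonal part, the key observation is that independence of $I$ forces every pair $x \neq y$ in $I$ to satisfy $\{x,y\} \notin E$, so $K(x,y) \leq 0$; combined with the nonnegativity of $w^{1/2}$, the off-diagonal sum is $\leq 0$. Chaining the two inequalities gives $w(I)^2 \leq M \, w(I)$, and dividing by $w(I)$ (handling $w(I) = 0$ trivially, after noting that $M \geq 0$ follows from $K(x,x) \geq w(x)$) yields $w(I) \leq M$.

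Since this holds for every feasible $(K,M)$ and every independent $I$, taking the infimum over $M$ and the maximum over $I$ gives $\alpha_w(G) \leq \vartheta'_w(G)$. There is no real obstacle here; the only thing that requires a moment's care is picking the right test vector — using the unweighted indicator $\chi_I$ gives the weaker bound $(\sum_{x\in I} w(x)^{1/2})^2 \leq |I|\,M$, which by Cauchy--Schwarz goes in the wrong direction, whereas weighting by $w^{1/2}$ produces matching factors of $w(I)$ on both sides of the final inequality.
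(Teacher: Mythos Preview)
Your proof is correct and is essentially the same as the paper's own argument: the paper considers the sum $\sum_{x,y\in I} w(x)^{1/2}w(y)^{1/2}K(x,y)$, which is exactly your $u^\tp K u$ for the weighted indicator $u$, and bounds it above and below in the same way. Your remark on the $w(I)=0$ case and the observation that $M\geq 0$ are minor embellishments, but the core reasoning is identical.
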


\begin{proof}
Let~$I \subseteq V$ be an independent set such that~$w(I) > 0$ (if
there is no such independent set, then~$\alpha_w(G) = 0$, and the
theorem follows trivially) and let~$M$ and~$K$ be a feasible solution
of~\eqref{eq:theta}.

Consider the sum
\[
\sum_{x, y \in I} w(x)^{1/2} w(y)^{1/2} K(x, y).
\]
This sum is at least
\[
\sum_{x, y \in I} w(x)^{1/2} w(y)^{1/2} w(x)^{1/2} w(y)^{1/2}
= w(I)^2
\]
because~$K - (w^{1/2})(w^{1/2})^\tp$ is positive semidefinite.

The same sum is also at most
\[
\sum_{x \in I} w(x) K(x, x) \leq M w(I)
\]
because~$K(x, x) \leq M$ and~$K(x, y)  \leq 0$ for distinct~$x$, $y
\in I$. Combining both inequalities proves the theorem.
\end{proof}

Notice that the formulation we use for~$\vartheta'_w(G)$ is a
\emph{dual formulation}, since any feasible solution gives an upper
bound for the independence number.

So~$\vartheta'_w(G)$ provides an upper bound for~$\alpha_w(G)$
when~$G$ is finite. When more generally~$V$ is a separable and compact
measure space satisfying a mild technical condition, graph
parameter~$\vartheta'_w$ can be extended in a natural way so as to
provide an upper bound for the weighted independence number.

This extension relies on a basic notion from functional analysis, that
of kernel. Let~$V$ be a separable and compact topological space
and~$\mu$ be a finite Borel measure over~$V$. A kernel is a 
complex-valued function~$K \in L^2(V \times V)$.

A kernel~$K$ can be seen as a generalization of a matrix. Like a matrix,
a kernel defines an operator on~$L^2(V)$ by
\[
(K f)(x) = \int_V K(x, y) f(y)\, d\mu(y).
\]

Kernel~$K$ is \defi{Hermitian} if~$K(x, y) = \overline{K(y, x)}$ for all~$x$,
$y \in V$. Hermitian kernels are the analogues of Hermitian matrices,
and an analogue of the spectral decomposition theorem, known as the
Hilbert-Schmidt theorem, holds, as we describe now.

A function~$f \in L^2(V)$, $f \neq 0$, is an \defi{eigenfunction}
of~$K$ if $K f = \lambda f$ for some number~$\lambda$, which is the
\defi{associated eigenvalue} of~$f$. We say~$\lambda$ is an eigenvalue
of~$K$ if it is the associated eigenvalue of some eigenfunction
of~$K$.  The Hilbert-Schmidt theorem states that, for a Hermitian
kernel~$K$, there is a complete orthonormal
system~$\varphi_1$, $\varphi_2$, \dots\ of~$L^2(V)$ consisting of
eigenfunctions of~$K$ such that
\[
K(x, y) = \sum_{i=1}^\infty \lambda_i \varphi_i(x) \overline{\varphi_i(y)}
\]
with~$L^2$ convergence, where the real number~$\lambda_i$ is the
associated eigenvalue of~$\varphi_i$. Then the~$\lambda_i$ with their
multiplicities are all the eigenvalues of~$K$.

A Hermitian kernel~$K$ is \defi{positive} if all its
eigenvalues are nonnegative; this is the analogue of a positive
semidefinite matrix. An equivalent definition is as follows: $K$ is
positive if for every~$\rho \in L^2(V)$ we have
\[
\int_V \int_V K(x, y) \rho(x) \overline{\rho(y)}\, d\mu(x) d\mu(y) \geq 0.
\]

Using kernels, one may extend the definition of~$\vartheta'_w$ also to
graphs defined over separable and compact measure spaces, simply by
replacing the matrices in~\eqref{eq:theta} by continuous kernels. In
other words we define
\begin{equation}
\label{eq:theta-gen}
\begin{array}{rll}
\vartheta'_w(G) = \inf&M\\
&K(x, x) \leq M&\text{for all~$x \in V$},\\
&K(x, y) \leq 0&\text{for all~$\{x, y\} \notin E$ with~$x \neq y$},\\
&\multicolumn{2}{l}{\text{$K\colon V \times V \to \R$ is continuous and
    Hermitian},}\\
&\multicolumn{2}{l}{\text{$K - W$ is positive},}\\
\end{array}
\end{equation}
where~$W \in L^2(V \times V)$ is the kernel such that~$W(x, y) =
w(x)^{1/2} w(y)^{1/2}$.

One then has the theorem:

\begin{theorem}
\label{thm:theta-gen}
  Let~$G = (V, E)$ be a graph where~$V$ is a separable and compact
  measure space in which any open set has nonzero
  measure. Let~$w\colon V \to \R_+$ be a continuous weight
  function. Then~$\alpha_w(G) \leq \vartheta'_w(G)$.
\end{theorem}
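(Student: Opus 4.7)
The plan is to mimic the proof of Theorem~\ref{thm:theta}, with the key new ingredient being a way to extract pointwise values of the continuous kernel~$K$ at a finite independent set from the integral positivity of~$K-W$. First, I would reduce to finite independent sets: since~$w \geq 0$, the weight $w(I)$ of any independent set is the supremum of $w(F)$ over its finite subsets $F$, and each such $F$ is itself independent. Hence it suffices to prove $w(I) \leq M$ whenever $I = \{x_1, \ldots, x_N\}$ is finite and $(M, K)$ is feasible for~\eqref{eq:theta-gen}.

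Fix such $I$ and $(M, K)$. The proof of Theorem~\ref{thm:theta} wants to test the positive semidefiniteness of $K - W$ against the discrete vector with entries $w(x_i)^{1/2}$ supported on $I$, but positivity of a kernel is an integral statement over $L^2(V)$. I would therefore approximate that discrete vector by $L^2$ bumps: for small $\epsilon > 0$, take pairwise disjoint open neighborhoods $U_i^\epsilon$ of $x_i$ shrinking to $\{x_i\}$ and set
\[
  \rho_\epsilon \;=\; \sum_{i=1}^N \frac{w(x_i)^{1/2}}{\mu(U_i^\epsilon)}\,\mathbf{1}_{U_i^\epsilon}.
\]
The hypothesis that open sets have nonzero measure ensures $\mu(U_i^\epsilon) > 0$, so $\rho_\epsilon \in L^2(V)$. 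Applying positivity of $K - W$ to $\rho_\epsilon$ and using continuity of $K$ and of $W(x,y) = w(x)^{1/2} w(y)^{1/2}$ to pass to the limit $\epsilon \to 0$ should yield
\[
  \sum_{i,j=1}^N w(x_i)^{1/2} w(x_j)^{1/2} K(x_i, x_j) \;\geq\; w(I)^2.
\]

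The argument then closes as in the finite case: independence of $I$ gives $\{x_i, x_j\} \notin E$ for $i \neq j$, hence $K(x_i, x_j) \leq 0$, so the same double sum is at most $\sum_i w(x_i)K(x_i, x_i) \leq M\,w(I)$. Dividing by $w(I)$ (treating $w(I) = 0$ trivially) gives $w(I) \leq M$, and taking the infimum over feasible $M$ completes the proof.

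The main obstacle I expect is justifying the bump-function limit, i.e.\ showing
\[
  \int_V\!\int_V K(x,y)\,\rho_\epsilon(x)\,\rho_\epsilon(y)\,d\mu(x)\,d\mu(y) \;\longrightarrow\; \sum_{i,j=1}^N w(x_i)^{1/2} w(x_j)^{1/2} K(x_i, x_j),
\]
and the analogous convergence for $W$. This uses all three technical hypotheses in an essential way: continuity of $K$ and $w$ (so that $K$ and $W$ can be replaced by their values at $(x_i, x_j)$ up to vanishing error on the shrinking cells), Hausdorff-type separation in $V$ (so the $U_i^\epsilon$ can be taken disjoint for small $\epsilon$), and the positive-measure condition on open sets (without which the normalizing factors $1/\mu(U_i^\epsilon)$ would be undefined). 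Everything else is straightforward bookkeeping.
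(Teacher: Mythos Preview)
Your proof is correct and is essentially the same as the paper's. The paper invokes Bochner's observation that, on a compact space where every open set has positive measure, a continuous kernel is positive if and only if every finite principal submatrix $\bigl(K(x_i,x_j)\bigr)_{i,j=1}^N$ is positive semidefinite, and then mimics the proof of Theorem~\ref{thm:theta}; your bump-function limit is precisely the standard proof of the needed direction of Bochner's result, carried out inline rather than cited.
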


\begin{proof}
  Since~$V$ is compact and since we assume that every open subset
  of~$V$ has nonzero measure, we may use the following observation of
  Bochner~\cite{Bochner}: a continuous kernel~$K$ is positive if and
  only if for any choice of~$N$ and points~$x_1$, \dots,~$x_N \in V$
  we have that the matrix
\[
\bigl(K(x_i, x_j)\bigr)_{i,j=1}^N
\]
is positive semidefinite.

Using this characterization of continuous and positive kernels, we may
mimic the proof of Theorem~\ref{thm:theta} and obtain the desired
result. This is why we require~$K$ to be continuous in the definition
of~$\vartheta'_w$ and also why we require~$w$ to be a continuous
function: because we want to apply Bochner's characterization to~$K -
W$.
\end{proof}

As was the case with Theorem~\ref{thm:theta}, any feasible solution
of~\eqref{eq:theta-gen} gives an upper bound for the weighted
independence number. This is actually quite useful in the infinite
setting, because then it is often harder to obtain optimal solutions.
Notice that it might also be that~$\alpha_w(G) = \infty$. In this
case, the theorem still holds, since~\eqref{eq:theta-gen} will be
infeasible, and therefore~$\vartheta'_w(G) = \infty$.

\section{Exploiting symmetry with harmonic analysis}
\label{sec:harmonic analysis}

If $G$ is a finite graph, then computing $\vartheta'_w(G)$ is solving
a semidefinite program whose value can be found with the help of a
computer, i.e., it can be approximated up to arbitrary precision in
polynomial time. This is a theoretical assertion however; in practice,
for moderately big graphs (say with thousands of vertices), if one
cannot exploit any special structure of the graph, then computing the
theta number is often impossible with today's methods and computers.

If the graph $G$ is infinite, we are dealing with an
infinite-dimensional semidefinite program. If one then desires to use
computational optimization methods, at some point the transition from
infinite to finite has to be made.  One way to make this transition is
to use finer and finer grids to discretize the infinite graph and
solve the corresponding finite semidefinite programs, obtaining bounds
for the infinite problem. For coarse grids, however, this approach
performs poorly, and for fine grids it becomes soon computationally
infeasible. Moreover, with this approach one looses the entire
geometrical structure of the packing graphs.

The alternative is to use harmonic analysis. Instead of
computing~$\vartheta'_w$ in the ``time domain'', we could formulate the
optimization problem in the ``Fourier domain''. This has a twofold
advantage. First, the Fourier domain can be discretized essentially by
truncation and in doing so we do not loose too much, since it is to be
expected that most of the information in a well-structured problem
(like a packing problem) is to be concentrated in the beginning of the
spectrum. Second, the translation group $\R^n$ acts on the
translational packing graph and the group of Euclidean motions $\so(n)
\rtimes \R^n$ acts on the congruent packing graph; using harmonic
analysis we can exploit the symmetry of this situation.  On the down
side, a very explicit understanding of the harmonic analysis of these
two groups is needed, what in the case of the motion group can be
cumbersome.

To make things concrete, let us demonstrate the basic strategy using
the cyclic group~$\Zn$. This group is finite, so that discretization
is unnecessary, and Abelian, so that harmonic analysis becomes
simple. Nevertheless, this simple example already carries many
essential features, and ought to be kept in mind by the reader when
the more complicated cases are treated later.

Let $\Sigma \subseteq \Zn$ with $0 \not\in \Sigma$ be closed under
taking negatives, i.e., $\Sigma = -\Sigma$. Then we define the
\defi{Cayley graph}
\[
\cayley(\Zn, \Sigma) = (\Zn, \{\,\{x,y\} : x - y \in \Sigma\}\,\}),
\]
which is an undirected graph whose vertices are the elements of~$\Zn$
and where $\Sigma$ defines the neighborhood of the neutral
element~$0$; this neighborhood is then transported to every vertex by
group translations. Since $\Sigma = -\Sigma$, the definition is
consistent, and since~$0 \notin \Sigma$, the Cayley graph does not
have loops.  For example, the $n$-cycle can be represented as a Cayley
graph:
\[
C_n = \cayley(\Zn, \Sigma) \quad \text{with} \quad \Sigma = \{1,-1\}.
\]

The goal in this section is to show that the computation of the theta
number $\vartheta'_e(\cayley(\Zn, \Sigma))$ with unit weights $e =
(1, \ldots, 1)$ reduces from a semidefinite program to a linear
program if one works in the Fourier domain.

For this we need the \defi{characters} of $\Zn$, which are group
homomorphisms $\chi\colon \Zn \to \T$, where~$\T$ is the unit circle in
the complex plane. So every character~$\chi$ satisfies
\[
\chi(x + y) = \chi(x) \chi(y)
\]
for all~$x$, $y \in \Zn$.

The characters themselves form a group with the operation of pointwise
multiplication $(\chi \psi)(x) = \chi(x) \psi(x)$; this is the
\defi{dual group}~$\dZn$ of~$\Zn$. The \defi{trivial character}~$e$
of~$\Zn$ defined by~$e(x) = 1$ for all~$x \in \Zn$ is the unit
element. Moreover, if~$\chi$ is a character, then its inverse is its
complex conjugate~$\overline{\chi}$ that is such
that~$\overline{\chi}(x) = \overline{\chi(x)}$ for all~$x \in \Zn$.
We often view characters as vectors in the vector space~$\C^{\Zn}$.

\begin{lemma}
  Let $\chi$ and $\psi$ be characters of~$\Zn$. Then the following
  orthogonality relation holds:
\[
\chi^* \psi = \sum_{x \in \Zn} \overline{\chi(x)} \psi(x) = 
\begin{cases}
|\Zn| & \text{if $\chi = \psi$,}\\
0 & \text{otherwise.}
\end{cases}
\]
\end{lemma}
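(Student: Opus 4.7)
The plan is to reduce both cases to computing the sum $\sum_{x \in \Zn} \varphi(x)$ for a single character $\varphi$. Since the characters form the group $\dZn$ under pointwise multiplication, and since $\overline{\chi}$ is the inverse of $\chi$, the product $\varphi = \overline{\chi}\psi$ is itself a character of $\Zn$. Noting that $\overline{\chi(x)}\psi(x) = \varphi(x)$, the proof splits into two cases according to whether $\varphi$ is trivial or not.

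First I would handle the case $\chi = \psi$. Here $\varphi(x) = \overline{\chi(x)}\chi(x) = |\chi(x)|^2 = 1$ for all $x \in \Zn$, because characters take values in the unit circle $\T$. Summing over the $|\Zn|$ elements of the group immediately gives $|\Zn|$.

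The interesting case is $\chi \neq \psi$, equivalently, $\varphi$ is a nontrivial character. Then there exists some $y \in \Zn$ with $\varphi(y) \neq 1$. Using the homomorphism property and the translation-invariance of the group sum, I would compute
\[
\varphi(y) \sum_{x \in \Zn} \varphi(x) = \sum_{x \in \Zn} \varphi(y+x) = \sum_{x \in \Zn} \varphi(x),
\]
where the last equality follows by re-indexing the summation via the bijection $x \mapsto y + x$ of $\Zn$. Rearranging yields $(\varphi(y) - 1) \sum_{x} \varphi(x) = 0$, and since $\varphi(y) \neq 1$ we conclude $\sum_x \varphi(x) = 0$, as desired.

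There is no real obstacle here: the argument is entirely standard, and the only conceptual point is recognizing that the whole statement reduces, via the group structure of $\dZn$, to evaluating the sum of a single character. The translation-invariance trick in the nontrivial case is the crux, and it works uniformly for any finite abelian group, not just $\Zn$.
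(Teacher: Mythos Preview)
Your proof is correct and follows essentially the same approach as the paper: both handle the equal case by noting $\overline{\chi(x)}\chi(x)=1$, and both handle the unequal case via the translation-invariance trick applied to the character $\overline{\chi}\psi$. The only difference is cosmetic---you name this product $\varphi$ explicitly---so there is nothing substantive to add.
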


\begin{proof}
If $\chi = \psi$, then,
\[
\chi^* \chi = \sum_{x \in \Zn} \overline{\chi}(x) \chi(x) = \sum_{x \in \Zn} 1 = |\Zn|
\]
holds. If $\chi \neq \psi$, then there is $y \in \Zn$, so that
$(\overline{\chi}\psi)(y) \neq 1$. Furthermore, we have
\begin{multline*}
(\overline{\chi}\psi)(y) \chi^* \psi  = (\overline{\chi}\psi)(y) \sum_{x \in \Zn} \overline{\chi}(x) \psi(x)
= \sum_{x \in \Zn} \overline{\chi}(x + y) \psi(x + y)\\
= \sum_{x \in \Zn} \overline{\chi}(x) \psi(x)
= \chi^* \psi,
\end{multline*}
so $\chi^* \psi$ has to be zero.
\end{proof}

As a corollary we can explicitly give all characters of~$\Zn$ and see
that they form an orthogonal basis of $\C^{\Zn}$. It follows that the
dual group $\dZn$ is isomorphic to~$\Zn$.

\begin{corollary}
Every element~$u \in \Zn$ defines a character of~$\Zn$ by
\[
\chi_u(x) = e^{2\pi i u x/n}.
\]
The map $u \mapsto \chi_u$ is a group isomorphism between $\Zn$ and its
dual group $\dZn$.
\end{corollary}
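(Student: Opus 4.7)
My plan is to verify the two claims of the corollary in turn: first that each $u \in \Zn$ really yields a character, then that the assignment $u \mapsto \chi_u$ is a bijective group homomorphism onto~$\dZn$.

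For the first claim, I would check that $\chi_u(x) = e^{2\pi i u x/n}$ is well-defined on $\Zn$ (replacing $x$ by $x+n$ multiplies the exponent by $e^{2\pi i u} = 1$), that its values lie on~$\T$, and that the homomorphism property $\chi_u(x+y) = \chi_u(x)\chi_u(y)$ follows directly from the exponential addition rule. Hence each $\chi_u$ is a genuine character of~$\Zn$.

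Next I would show that $u \mapsto \chi_u$ is itself a group homomorphism from $\Zn$ into~$\dZn$. For $u,v,x \in \Zn$,
\[
\chi_{u+v}(x) = e^{2\pi i (u+v) x/n} = e^{2\pi i u x/n} \, e^{2\pi i v x/n} = (\chi_u \chi_v)(x),
\]
so $\chi_{u+v} = \chi_u \chi_v$. Injectivity is immediate by evaluating at $x=1$: if $\chi_u = \chi_v$, then $e^{2\pi i u/n} = e^{2\pi i v/n}$, which forces $u \equiv v \pmod n$.

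The only step that really uses the preceding work, and the main obstacle, is surjectivity --- showing these are \emph{all} the characters. For this I would invoke the orthogonality relation just proved: distinct characters of $\Zn$ are orthogonal with respect to the standard Hermitian inner product on $\C^{\Zn}$, and so in particular linearly independent. Since $\dim_{\C} \C^{\Zn} = n$, the dual group $\dZn$ can contain at most $n$ elements. We have exhibited $n$ manifestly distinct characters $\chi_0, \chi_1, \ldots, \chi_{n-1}$ (distinct by the injectivity just verified), so these must exhaust $\dZn$. Combined with the homomorphism property, the map $u \mapsto \chi_u$ is a group isomorphism between $\Zn$ and~$\dZn$, completing the proof.
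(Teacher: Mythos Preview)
Your proof is correct and follows essentially the same route as the paper: verify directly that $u \mapsto \chi_u$ is a well-defined injective group homomorphism into $\dZn$, then use the orthogonality relation to bound $|\dZn| \leq \dim_{\C} \C^{\Zn} = n$ and conclude surjectivity. The paper merely compresses your first three paragraphs into the phrase ``one immediately verifies,'' but the substance is identical.
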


\begin{proof}
  One immediately verifies that the map $u \mapsto \chi_u$ is
  well-defined, that it is an injective group homomorphism, and that
  $\chi_u$ is a character of~$\Zn$. By the orthogonality relation we
  see that the number of different characters of~$\Zn$ is at most the
  dimension of the space $\C^{\Zn}$, hence~$|\Zn|$ equals~$|\dZn|$ and
  the map is a bijection.
\end{proof}

Given a function~$f\colon \Zn \to \C$, the function~$\hat{f}\colon \dZn \to \C$ such that
\[
\hat{f}(\chi) = \frac{1}{|\Zn|}\sum_{x \in \Zn} f(x) \chi^{-1}(x)
\]
is the \defi{discrete Fourier transform} of~$f$; the coefficients
$\hat{f}(\chi)$ are called the \defi{Fourier coefficients} of~$f$. We
have then the \defi{Fourier inversion formula}:
\[
f(x) = \sum_{\chi \in \dZn} \hat{f}(\chi) \chi(x).
\]

We say that~$f\colon \Zn \to \C$ is of \defi{positive type} if~$f(x) =
\overline{f(-x)}$ for all~$x \in \Zn$ and for all~$\rho\colon \Zn \to
\C$ we have
\[
\sum_{x,y \in \Zn} f(x-y) \rho(x) \overline{\rho(y)} \geq 0.
\]
So~$f$ is of positive type if and only if the matrix~$K(x, y)
= f(x - y)$ is positive semidefinite. With this we have the following
characterization for the theta number of~$\cayley(\Zn, \Sigma)$.

\begin{theorem}
\label{thm:symmetryreduction}
We have that
\begin{equation}
\label{eq:time-domain}
\begin{array}{rll}
\vartheta'_e(\cayley(\Zn, \Sigma)) = \min&f(0)\\
&f(x) \leq 0\quad\text{for all~$x \notin \Sigma \cup \{0\}$,}\\
&\sum_{x\in\Zn} f(x) \geq |\Z_n|,\\
&f\colon \Zn \to \R\text{ is of positive type.}
\end{array}
\end{equation}

Alternatively, expressing~$f$ in the Fourier domain we obtain:
\begin{equation}
\label{eq:fourier-domain}
\begin{array}{rll}
\vartheta'_e(\cayley(\Zn, \Sigma)) = \min&\sum_{\chi \in \dZn} \hat{f}(\chi)\\
&\sum_{\chi \in \dZn} \hat{f}(\chi) \chi(x) \leq 0\quad\text{for all~$x \notin \Sigma \cup \{0\}$,}\\
&\hat{f}(e) \geq 1,\\
&\hat{f}(\chi) \geq 0\text{ and }\hat{f}(\chi)
  = \hat{f}(\chi^{-1})\text{ for all~$\chi \in \dZn$.}
\end{array}
\end{equation}
\end{theorem}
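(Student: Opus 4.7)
The plan is to apply symmetry reduction to the SDP~\eqref{eq:theta} defining $\vartheta'_e(\cayley(\Zn,\Sigma))$, collapsing it to an optimization over circulant matrices, and then to Fourier-diagonalize. Since the edge set of the Cayley graph is translation-invariant by construction, the group $\Zn$ acts on the graph by automorphisms. Writing $W$ for the all-ones matrix $(e)(e)^\tp$, and given a feasible $K$ with objective value $M$, I would average:
\[
K'(x,y) = \frac{1}{|\Zn|}\sum_{z\in\Zn} K(x+z,\, y+z).
\]
Each translate $K_z(x,y) = K(x+z,y+z)$ is a conjugate of $K$ by a permutation matrix, so it preserves symmetry, diagonal values, the inequalities on non-edges, and (since $W$ is translation-invariant) the condition $K_z - W\succeq 0$. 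Averaging preserves semidefiniteness, so $K' - W\succeq 0$ as well. The averaged matrix satisfies $K'(x+t,y+t) = K'(x,y)$ for all $t$, hence $K'(x,y) = f(x-y)$ for some $f\colon\Zn\to\R$ with $f(-x) = f(x)$, and $M$ is unchanged. It therefore suffices to optimize over such circulant $K'$, parametrized by $f$.

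Translating the constraints of~\eqref{eq:theta} into conditions on $f$ is immediate except for the semidefiniteness. Here I would use the fact that the characters $\chi\in\dZn$ form a simultaneous orthogonal eigenbasis for every circulant matrix: a direct computation gives $K'\chi = |\Zn|\hat f(\chi)\chi$ for every $\chi\in\dZn$, while $W\chi = |\Zn|\chi$ if $\chi = e$ and $W\chi = 0$ otherwise. Therefore $K'-W\succeq 0$ is equivalent to the pair of scalar conditions $\hat f(e)\ge 1$ and $\hat f(\chi)\ge 0$ for $\chi\ne e$. These translate respectively to $\sum_{x\in\Zn} f(x)\ge|\Zn|$ and (combined with $\hat f(e)\ge 1>0$) ``$f$ is of positive type'', yielding formulation~\eqref{eq:time-domain}, with objective $f(0)$ since $K'(x,x) = f(0)$ is constant on the diagonal.

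Formulation~\eqref{eq:fourier-domain} then follows by substituting the inversion formula $f(x) = \sum_\chi \hat f(\chi)\chi(x)$ into every constraint of~\eqref{eq:time-domain}: the objective becomes $f(0) = \sum_\chi \hat f(\chi)$; the non-edge inequality becomes $\sum_\chi \hat f(\chi)\chi(x)\le 0$; the condition $\sum_x f(x)\ge|\Zn|$ becomes $\hat f(e)\ge 1$ via $\hat f(e) = |\Zn|^{-1}\sum_x f(x)$; positive type becomes $\hat f(\chi)\ge 0$; and the requirement that $f$ be real-valued translates, via $\hat f(\chi^{-1}) = \overline{\hat f(\chi)}$, into the symmetry $\hat f(\chi) = \hat f(\chi^{-1})$. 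The only genuinely content-bearing step is the symmetry reduction; everything after that is the spectral decomposition of circulants, which is the simplest instance of the harmonic-analysis framework that the paper will extend in subsequent sections to $\R^n$ and the Euclidean motion group $\so(n)\rtimes\R^n$.
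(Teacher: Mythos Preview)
Your proposal is correct and follows essentially the same route as the paper: average over the $\Zn$-action to reduce to circulant matrices $K'(x,y)=f(x-y)$, diagonalize simultaneously $K'$ and $W=ee^\tp$ via the characters to obtain the eigenvalue conditions $\hat f(e)\ge 1$ and $\hat f(\chi)\ge 0$, and then rewrite everything through the inversion formula. The paper carries out exactly this symmetrization-then-Fourier argument, including the explicit verification that $K'\chi = |\Zn|\hat f(\chi)\,\chi$.
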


\begin{proof}
Functions~$f\colon \Zn \to \C$ correspond to $\Zn$-invariant
matrices~$K\colon \Zn \times \Zn \to \C$, which are matrices such
that~$K(x+z, y+z) = K(x, y)$ for all~$x$, $y$, $z \in \Zn$.

In solving problem~\eqref{eq:theta} for computing~$\vartheta'_e$
we may restrict ourselves to $\Zn$-invariant matrices. This can be
seen via a symmetrization argument: If~$(M, K)$ is an optimal solution
of~\eqref{eq:theta}, then so is $(M, \overline{K})$ with
\[
\overline{K}(x, y) = \frac{1}{|\Zn|} \sum_{z\in\Zn} K(x+z, y+z),
\]
which is $\Zn$-invariant.

So we can translate problem~\eqref{eq:theta}
into~\eqref{eq:time-domain}. The objective function and the constraint
on nonedges translate easily. The positive-semidefiniteness constraint
requires a bit more work.

First, observe that to require~$K$ to be real and symmetric is to
require~$f$ to be real and such that~$f(x) = f(-x)$ for all~$x \in
\Zn$. We claim that each character~$\chi$ of~$\Zn$ gives an
eigenvector of~$K$ with eigenvalue~$|\Zn| \hat{f}(\chi)$. Indeed, using
the inversion formula we have
\[
\begin{split}
(K\chi)(x) &= \sum_{y \in \Zn} K(x, y) \chi(y) = \sum_{y\in\Zn} f(x-y) \chi(y)\\
&=\sum_{y\in\Zn} \sum_{\psi\in\dZn} \hat{f}(\psi) \psi(x-y) \chi(y)\\
&=\sum_{\psi\in\dZn} \hat{f}(\psi) \sum_{y\in\Zn} \psi(y) \chi(x-y)\\
&=\sum_{\psi\in\dZn} \hat{f}(\psi) \chi(x) \sum_{y\in\Zn} \psi(y)
  \overline{\chi(y)}\\
&=|\Zn| \hat{f}(\chi) \chi(x),
\end{split}
\]
as claimed.

This immediately implies that~$K$ is positive semidefinite --- or,
equivalently,~$f$ is of positive type --- if and only
if~$\hat{f}(\chi) \geq 0$ for all characters~$\chi$. Now,
since~$\hat{f}(e) = |\Zn|^{-1}\sum_{x\in\Zn} f(x)$, and since~$e$ is
an eigenvalue of~$K$, then~$K - ee^\tp$ is positive semidefinite if
and only if~$\sum_{x\in\Zn} f(x) \geq |\Zn|$ and~$f$ is of positive
type.

So we see that~\eqref{eq:theta} can be translated
into~\eqref{eq:time-domain}. Using the inversion formula and noting
that~$f$ is real-valued if and only if~$\hat{f}(\chi) =
\hat{f}(\chi^{-1})$ for all~$\chi$, one immediately
obtains~\eqref{eq:fourier-domain}.
\end{proof}

Cayley graphs on the cyclic group are not particularly
exciting. Everything in this section, however, can be
straightforwardly applied to any finite Abelian group. If, for
instance, one considers the group~$\Z_2^n$, then it becomes possible
to model binary codes as independent sets of Cayley graphs, and the
analogue of Theorem~\ref{thm:symmetryreduction} gives Delsarte's
linear programming bound~\cite{Delsarte}.

\section{Upper bounds for congruent and translational body packings}
\label{sec:upper bounds}

The packing graphs described above have noncompact vertex sets, but we
said they can be compactified so that the theta number can be
applied. Let us now see how that can be done.

First we need a definition of packing density. Given a
packing~$\Pcal$, we say that its density is~$\Delta$ if for every~$p
\in \R^n$ we have
\[
\Delta = \lim_{r \to \infty} \frac{\vol(B(p, r) \cap \Pcal)}{\vol B(p, r)},
\]
where~$B(p, r)$ is the ball of radius~$r$ centered at~$p$. Not every
packing has a density, but every packing has an \defi{upper density}
given by
\[
\limsup_{r \to \infty} \sup_{p \in \R^n}
\frac{\vol(B(p, r) \cap \Pcal)}{\vol B(p, r)}.
\]

We say that a packing~$\Pcal$ is \defi{periodic} if there is a
lattice\footnote{A lattice is a discrete subgroup of~$(\R^n, +)$.} $L
\subseteq \R^n$ that leaves~$\Pcal$ invariant, that is, $\Pcal = x +
\Pcal$ for every~$x \in L$. Lattice~$L$ is the \defi{periodicity
  lattice} of~$\Pcal$. In other words,~$\Pcal$ consists of some bodies
placed inside the fundamental cell of~$L$, and this arrangement
repeats itself at each copy of the fundamental cell translated by
vectors of the lattice.

Periodic packings always have a density. Moreover, given any
packing~$\Pcal$, one may define a sequence of periodic packings whose
fundamental cells have volumes approaching infinity and whose
densities converge to the upper density of~$\Pcal$. So, when computing
bounds for the maximum density of packings, we may
restrict ourselves to periodic packings.

This is the key observation that allows us to compactify the packing
graphs. For let~$\Kcal_1$, \dots,~$\Kcal_N \subseteq \R^n$ be some
given convex bodies. We have defined the translational packing graph
of~$\Kcal_1$, \dots,~$\Kcal_N$. Given a lattice~$L \subseteq \R^n$, we
may define a periodic version of the packing graph. This is the
graph~$G_L$, whose vertex set is~$V = \{ 1, \dots, N \} \times (\R^n /
L)$. Now, vertex~$(i, x)$ of~$G_L$ corresponds not only to one body,
but to many: it corresponds to all the bodies~$x + v + \Kcal_i$,
for~$v \in L$. Vertices~$(i, x)$ and~$(j, y)$ are then adjacent if for
some~$v \in L$ we have
\[
(x + v + \Kcal_i)^\circ \cap (y + \Kcal_j)^\circ \neq \emptyset.
\]
Then an independent set of~$G_L$ corresponds to a periodic packing of
translations of~$\Kcal_1$, \dots,~$\Kcal_N$ with periodicity
lattice~$L$, and vice versa.

Graph $G_L$ has a compact vertex set and each one of its independent
sets is finite. If we consider the weight function~$w\colon V \to
\R_+$ such that~$w(i, x) = \vol \Kcal_i$, then the maximum density of
a periodic packing with periodicity lattice~$L$ is given by
\[
\frac{\alpha_w(G_L)}{\vol(\R^n / L)}.
\]
So one strategy to find an upper bound for the maximum density of a
packing is to find an upper bound for~$\alpha_w(G_L)$ for every~$L$.

Notice~$V$ is actually a separable and compact measure space that
satisfies the hypothesis of
Theorem~\ref{thm:theta-gen}. So~$\vartheta'_w(G_L)$ provides an upper
bound for~$\alpha_w(G_L)$.  Let us see how one may obtain a feasible
solution of~\eqref{eq:theta-gen} for every graph~$G_L$.

Let~$f\colon \R^n \to \C$ be a rapidly decreasing function. This is an
infinitely differentiable function with the following property: any
derivative, multiplied by any polynomial, is a
bounded function.

The \defi{Fourier transform} of~$f$ computed at~$u \in \R^n$ is
\[
\hat{f}(u) = \int_{\R^n} f(x) e^{-2\pi i u \cdot x}\, dx,
\]
where~$u \cdot x = u_1 x_1 + \cdots + u_n x_n$. Since~$f$ is rapidly
decreasing, the inversion formula holds, giving
\[
f(x) = \int_{\R^n} \hat{f}(u) e^{2\pi i u\cdot x}\, du.
\]

Consider now a matrix-valued function~$f\colon \R^n \to \C^{N \times
  N}$, where $f(x) = \bigl(f_{ij}(x)\bigr)_{i,j=1}^N$ and each
function~$f_{ij}$ is rapidly decreasing. For~$u \in \R^n$ we write
\[
\hat{f}(u) = \bigl(\hat{f}_{ij}(u)\bigr)_{i,j=1}^N.
\]
So the Fourier transform of~$f$ is also a matrix-valued function.

We say~$f$ is of \defi{positive type} if~$f(x) = f(-x)^*$ for every~$x
\in \R^n$ and for every $L^\infty$ function $\rho \colon \R^n \to
\C^N$ we have
\[
\int_{\R^n} \int_{\R^n} \rho(y)^* f(x - y) \rho(x)\, dx dy \geq 0.
\]
One may prove that~$f$ is of positive type if and only if~$\hat{f}(u)$
is positive semidefinite for every~$u \in \R^n$.

With this we have the following theorem:

\begin{theorem}
\label{thm:trans-pack}
Let~$\Kcal_1$, \dots,~$\Kcal_N \subseteq \R^n$ be convex
bodies. Suppose~$f\colon \R^n \to \R^{N \times N}$ is such
that each~$f_{ij}$ is rapidly decreasing and that it satisfies the
following conditions:

\begin{enumerate}
\item[(i)] $f_{ij}(x) \leq 0$ whenever $(x + \Kcal_i)^\circ \cap \Kcal_j^\circ =
  \emptyset$;

\item[(ii)] $\hat{f}(0) - \bigl((\vol \Kcal_i)^{1/2} (\vol
  \Kcal_j)^{1/2}\bigr)_{i,j=1}^N$ is positive semidefinite;

\item[(iii)] $f$ is of positive type.
\end{enumerate}

\noindent
Then the maximum density of a packing of translated copies
of~$\Kcal_1$, \dots,~$\Kcal_N$ is at most~$\max\{\, f_{ii}(0) :
\text{$i = 1$, \dots,~$N$}\,\}$.
\end{theorem}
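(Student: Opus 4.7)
The plan is to adapt the Cohn--Elkies averaging argument to the matrix-valued, multi-body setting; rather than constructing an explicit kernel feasible for~\eqref{eq:theta-gen} on each graph~$G_L$, I will directly trap the density between matching bounds on one double sum. By the reduction described just above, it suffices to bound the density of an arbitrary periodic packing~$\Pcal$ with periodicity lattice~$L \subseteq \R^n$. I write the bodies of~$\Pcal$ inside one fundamental cell of~$L$ as $x_1 + \Kcal_{i_1}, \dots, x_k + \Kcal_{i_k}$ with type indices $i_\alpha \in \{1, \dots, N\}$, so the density of~$\Pcal$ equals $\Delta = \vol(\R^n/L)^{-1} \sum_{\alpha=1}^k \vol \Kcal_{i_\alpha}$.

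The central object is the double sum
\[
S = \sum_{\alpha,\beta=1}^k (\vol \Kcal_{i_\alpha})^{1/2} (\vol \Kcal_{i_\beta})^{1/2} \sum_{v \in L} f_{i_\alpha i_\beta}(x_\alpha - x_\beta + v),
\]
which converges absolutely because each $f_{ij}$ is rapidly decreasing. For the upper bound I invoke condition~(i): whenever $(\alpha, v) \neq (\beta, 0)$, the argument $x_\alpha - x_\beta + v$ records the relative displacement of two distinct bodies of~$\Pcal$, whose interiors are therefore disjoint, so $f_{i_\alpha i_\beta}(x_\alpha - x_\beta + v) \leq 0$. Only the $k$ diagonal terms can contribute positively, which gives
\[
S \;\leq\; \sum_{\alpha=1}^k \vol(\Kcal_{i_\alpha})\, f_{i_\alpha i_\alpha}(0) \;\leq\; \max_i f_{ii}(0) \cdot \Delta \vol(\R^n/L).
\]

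For the matching lower bound I apply Poisson summation to each inner lattice sum and collect the exponentials into the type-indexed vector $\eta_i(u) = (\vol \Kcal_i)^{1/2} \sum_{\alpha : i_\alpha = i} e^{2\pi i u \cdot x_\alpha}$, rewriting
\[
S = \frac{1}{\vol(\R^n/L)} \sum_{u \in L^*} \sum_{i,j=1}^N \hat f_{ij}(u)\, \eta_i(u)\, \overline{\eta_j(u)}.
\]
By~(iii) the matrix $\hat f(u)$ is positive semidefinite Hermitian for every $u \in L^*$, so each inner double sum is a nonnegative quadratic form against $\hat f(u)$. Discarding every $u \neq 0$ term and using~(ii) to dominate $\hat f(0)$ by the rank-one matrix with entries $(\vol \Kcal_i)^{1/2} (\vol \Kcal_j)^{1/2}$, a short calculation at $u = 0$ yields $S \geq (\sum_{\alpha=1}^k \vol \Kcal_{i_\alpha})^2 / \vol(\R^n/L) = \Delta^2 \vol(\R^n/L)$. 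Combining with the upper bound (and assuming $\Delta > 0$, the only nontrivial case) produces $\Delta \leq \max_i f_{ii}(0)$; one then passes from periodic packings to a general packing via the approximation statement recalled in this section.

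The step that requires the most care is the simultaneous application of Poisson summation to the matrix entries of $f$ and the regrouping of the phases $e^{2\pi i u \cdot x_\alpha}$ into the vectors $\eta(u)$ so that the positive-semidefiniteness of $\hat f(u)$ can be brought to bear. Once this identity is in place, the three hypotheses slot into their expected roles: (i) controls the physical-side upper bound, (iii) enforces nonnegativity of every Fourier-side summand, and (ii) provides exactly what is needed at $u = 0$ to manufacture the factor $(\Delta \vol(\R^n/L))^2$.
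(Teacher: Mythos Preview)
Your argument is correct: it is the matrix-valued Cohn--Elkies/Poisson-summation proof, and all three hypotheses are used exactly where you say. One cosmetic point: when you write ``using~(ii) to dominate $\hat f(0)$ by the rank-one matrix'', you mean the reverse inequality $\hat f(0)\succeq W'$; the computation you carry out is the right one.

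The paper, however, takes a deliberately different route in keeping with its theme. Instead of bounding the double sum $S$ directly, it builds from~$f$, for each periodicity lattice~$L$, the kernel
\[
K_L((i,x),(j,y)) = \vol(\R^n/L)\sum_{v\in L} f_{ij}(x-y+v)
\]
on $V=\{1,\dots,N\}\times(\R^n/L)$ and checks that $(M_L,K_L)$ is feasible for the generalized theta program~\eqref{eq:theta-gen}. Condition~(i) handles the nonedge constraint; conditions~(ii) and~(iii) are used to show that $K_L-W$ is a positive kernel, not via Poisson summation but by exhibiting a complete orthonormal system of eigenfunctions $a_{k,u}\otimes\chi_u$ (with $u\in L^*$) and verifying the eigenvalues are nonnegative. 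Theorem~\ref{thm:theta-gen} then gives $\alpha_w(G_L)\leq M_L$ and hence the density bound. Your Poisson-summation identity is essentially the trace of this eigen-decomposition evaluated on the characteristic data of the packing, so the two arguments are dual to one another. What the paper's approach buys is an explicit demonstration that Theorem~\ref{thm:trans-pack} is a specialization of the theta-number framework, which is the expository point of the section; what your approach buys is brevity and a proof that stands on its own without invoking Theorem~\ref{thm:theta-gen} or Bochner's characterization.
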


\begin{proof}
Let~$w\colon V \to \R_+$ be the weight function such that~$w(i, x) =
\vol \Kcal_i$ for all~$(i, x) \in V$. The proof of the theorem
consists in deriving from~$f$, for \textit{every} lattice~$L \subseteq \R^n$, a
kernel~$K_L \in L^2(V \times V)$, where~$V = \{ 1, \ldots, N \} \times
(\R^n / L)$, and a number~$M_L$ that together give a feasible solution
of~\eqref{eq:theta-gen}, thus obtaining an upper bound
for~$\alpha_w(G_L)$.

For a given lattice~$L$, we let
\[
K_L((i, x), (j, y)) = \vol(\R^n / L) \sum_{v \in L} f_{ij}(x - y + v).
\]
The above sum is well-defined since each~$f_{ij}$ is rapidly
decreasing. Moreover, this also implies that~$K_L$ is continuous.

Given two distinct, nonadjacent vertices~$(i, x)$ and~$(j, y)$
of~$G_L$, we have that for all~$v \in L$,
\[
(x + v + \Kcal_i)^\circ \cap (y + \Kcal_j)^\circ = \emptyset
\iff
(x - y + v + \Kcal_i)^\circ \cap \Kcal_j^\circ = \emptyset.
\]
This means that~$f_{ij}(x - y + v) \leq 0$ for all~$v \in L$. But
then $K_L((i, x), (j, y)) \leq 0$, as we wanted.

Now we show that~$K_L - W$ is a positive kernel where
$W((i,x), (j,y)) = (\vol \Kcal_i)^{1/2} (\vol \Kcal_j)^{1/2}$. This is
implied by conditions~(ii) and~(iii) of the theorem, and can be proven
directly with a bit of work by combining the definition of a positive
kernel with that of a function of positive type. We take however
another road and exhibit a complete list of eigenfunctions and
eigenvalues of~$K_L - W$.

Let~$L^* = \{\, u \in \R^n : \text{$u \cdot v \in \Z$ for all~$v \in
  L$}\,\}$ be the \defi{dual lattice} of~$L$ and consider the
matrix~$W' \in \R^{N \times N}$ with~$W'_{ij} = (\vol \Kcal_i)^{1/2}
(\vol \Kcal_j)^{1/2}$.  Since~$f$ is of positive type, for each~$u \in
\R^n$ we have that~$\hat{f}(u)$ is positive semidefinite. Moreover,
from condition~(ii) we have that~$\hat{f}(0) - W'$ is positive
semidefinite. So the matrices
\[
\hat{f}(u) - \delta_u W',
\]
where~$\delta_u$ equals~$1$ if~$u = 0$ and~$0$ otherwise, are positive
semidefinite.

For~$u \in L^*$, let~$a_{1,u}$, \dots,~$a_{N,u}$ be an orthonormal basis
of~$\R^N$ consisting of eigenvectors of~$\hat{f}(u) - \delta_u W'$,
with associated eigenvalues~$\lambda_{1, u}$, \dots,~$\lambda_{N, u}$,
which are all nonnegative.

Also for~$u \in L^*$, let~$\chi_u(x) = e^{2\pi i u \cdot
  x}$. Then~$(\vol(\R^n / L))^{1/2} \chi_u$, $u \in L^*$, forms a
complete orthonormal system of~$L^2(\R^n / L)$, and so
\[
(\vol(\R^n / L))^{1/2} a_{k, u} \otimes \chi_u
\]
for~$k = 1$, \dots,~$N$ and~$u \in L^*$ forms a complete orthonormal
system of~$L^2(V)$. We claim that each such function is an
eigenfunction of~$K_L - W$.

Indeed, let~$(i, x) \in V$ be given. Notice that
\[
\begin{split}
[W(a_{k,u} \otimes \chi_u)]((i, x))
&=\int_V W((i, x), (j, y)) (a_{k,u} \otimes \chi_u)(j, y)\,
d(j, y)\\
&=\sum_{j=1}^N W'_{ij} (a_{k,u})_j \int_{\R^n/L} e^{2\pi i u \cdot
  y}\, dy\\
&=\sum_{j=1}^N W'_{ij} (a_{k,u})_j \vol(\R^n / L) \delta_u\\
&=\vol(\R^n / L) (W' a_{k,u})_i \delta_u.
\end{split}
\]

Similarly we have
\[
\begin{split}
[K_L(a_{k,u} \otimes \chi_u)](i, x)
&=\int_V K_L((i, x), (j, y)) (a_{k,u} \otimes \chi_u)(j, y)\, d(j, y)\\
&=\vol(\R^n / L) \sum_{j=1}^N \int_{\R^n / L} \sum_{v \in L} f_{ij}(x-y+v) (a_{k,u})_j
e^{2\pi i u \cdot y}\, dy\\
&=\vol(\R^n / L)\sum_{j=1}^N (a_{k,u})_j \int_{\R^n} f_{ij}(x - y) e^{2\pi i u \cdot
  y}\, dy\\
&=\vol(\R^n / L)\sum_{j=1}^N (a_{k,u})_j \int_{\R^n} f_{ij}(y) e^{2\pi i u \cdot
  (x-y)}\, dy\\
&=\vol(\R^n / L)\sum_{j=1}^N \hat{f}_{ij}(u) (a_{k,u})_j e^{2\pi i u
  \cdot x}\\
&=\vol(\R^n / L) (\hat{f}(u) a_{k,u})_i e^{2\pi i u \cdot x}.
\end{split}
\]

Putting it all together we have
\[
\begin{split}
[(K_L - W)(a_{k,u} \otimes \chi_u)](i, x)
&= \vol(\R^n / L) (\hat{f}(u) a_{k,u} - \delta_u W' a_{k, u})_i e^{2\pi
  i u \cdot x}\\
&= \vol(\R^n / L) \lambda_{k,u} (a_{k,u})_i e^{2\pi i u \cdot x}\\
&= \vol(\R^n / L) \lambda_{k,u} (a_{k,u} \otimes \chi_u)(i, x).
\end{split}
\]

So we see that all the functions~$a_{k,u} \otimes \chi_u$ are
eigenfunctions of~$K_L - W$ with nonnegative associated eigenvalues,
and it follows that~$K_L - W$ is a positive kernel.

Finally, we need to provide the bound~$M_L$ on the diagonal elements
of~$K_L$. To do that, we assume that the minimum vector of~$L$ is
large enough so that~$(v + \Kcal_i)^\circ \cap \Kcal_i^\circ =
\emptyset$ for all nonzero~$v \in L$; this is no loss of generality,
since we care only about lattices with large fundamental cell, and one
can scale~$L$ appropriately. But this means that~$f_{ii}(v) \leq 0$
for all~$i$ and nonzero~$v \in L$. Then from the definition of~$K_L$
we have that
\[
K_L((i, x), (i, x)) \leq \vol(\R^n/L) f_{ii}(0)
\]
for all~$(i, x) \in V$, and we can take~$M_L = \vol(\R^n/L)\max\{\, f_{ii}(0) :
\text{$i = 1$, \dots,~$N$}\,\}$.

So we have that the maximum density of a periodic packing with
periodicity lattice~$L$ is
\[
\frac{\alpha_w(G_L)}{\vol(\R^n / L)} \leq
\frac{\vartheta'_w(G_L)}{\vol(\R^n / L)} \leq \max\{\, f_{ii}(0) :
\text{$i = 0$, \dots,~$N$}\,\},
\]
proving the theorem.
\end{proof}

Theorem~\ref{thm:trans-pack} was stated in the time domain, but using
the inversion formula and the fact that a matrix-valued function is of
positive type if and only if its Fourier transform is everywhere
positive-semidefinite, we can restate it in the Fourier domain. We
will use this alternative version in the next section, when we discuss
a computational approach to find functions~$f$ satisfying the
conditions required in the theorem.

When~$N = 1$, Theorem~\ref{thm:trans-pack} is a direct analogue of
Theorem~\ref{thm:symmetryreduction}. Indeed, then the translational
packing graph is actually a Cayley graph with~$\R^n$ as its vertex
set. Though noncompact,~$\R^n$ is an Abelian group, and the
functions~$\chi_u(x) = e^{2\pi i u \cdot x}$, for~$u \in \R^n$, give its
characters. The Fourier transform for~$\R^n$ is the
direct analogue of the discrete Fourier transform for~$\Zn$.

Moreover, except for the compactification step and other technical
issues stemming from analysis, the proof of
Theorem~\ref{thm:trans-pack} follows exactly the same pattern of the
proof of Theorem~\ref{thm:symmetryreduction}. Notice in particular how
the characters give eigenvectors of the translation-invariant
kernel~$K$ defined by~$f$.

A theorem similar to Theorem~\ref{thm:trans-pack} can be proven for
packings of congruent copies of a given convex body~$\Kcal \subseteq
\R^n$. Recall that the congruent packing graph has as vertex set~$V =
\so(n) \times \R^n$. Set~$V$ is actually a group with identity~$(I,
0)$, where~$I$ is the identity matrix, under the operation
\[
(A, x) (B, y) = (AB, x + Ay).
\]
This group is denoted by~$\motion(n)$ and called the \defi{Euclidean
  motion group}.

We will now work with complex-valued functions over~$\motion(n)$.
There is also a definition of what it means for such a function to be
rapidly decreasing, though it is more technical than the definition
for functions over~$\R^n$.

A function~$f \in L^1(\motion(n))$ is of \defi{positive type} if
$f(A, x) = \overline{f((A, x)^{-1})}$ for all~$(A, x) \in \motion(n)$
and for all~$\rho \in L^\infty(\motion(n))$ we have
\[
\int_{\motion(n)} \int_{\motion(n)} f((B, y)^{-1} (A, x)) \rho(A, x)
\overline{\rho(B, y)}\, d(A, x) d(B, y) \geq 0.
\]
Here, we take as the measure the product of the Haar measure
for~$\so(n)$, normalized so that~$\so(n)$ has total measure~$1$, with
the Lebesgue measure for~$\R^n$.

\begin{theorem}
\label{thm:cong-pack}
Let~$\Kcal \subseteq \R^n$ be a convex body.  Suppose~$f\colon \motion(n)
\to \R$ is rapidly decreasing and that it satisfies the following conditions:

\begin{enumerate}
\item[(i)] $f(A, x) \leq 0$ whenever $(x + A \Kcal)^\circ \cap \Kcal^\circ
 = \emptyset$;

\item[(ii)] $\int_{\motion(n)} f(A, x)\, d(A, x) \geq \vol \Kcal$;

\item[(iii)] $f$ is of positive type.
\end{enumerate}

\noindent
Then the maximum density of a packing of congruent copies of~$\Kcal$
is at most~$f(I, 0)$.
\end{theorem}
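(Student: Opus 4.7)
The plan is to mimic the proof of Theorem~\ref{thm:trans-pack}, with the role of the $N$ translated pieces taken over by the continuum of rotations $A \in \so(n)$ and with the translation group promoted to the full motion group $\motion(n)$. The scalar case $N=1$ here is not genuinely simpler than the matrix-valued translational case, because the rotation variable $A$ plays the role that the discrete index $i \in \{1,\ldots,N\}$ did before, only now continuous and non-abelian.

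\textbf{Compactification and candidate bound.} First I would reduce to periodic packings and fix a lattice $L \subseteq \R^n$ whose minimum vector is long enough that $(v + A\Kcal)^\circ \cap (A\Kcal)^\circ = \emptyset$ for every nonzero $v \in L$ and every $A \in \so(n)$. Form the compactified congruent packing graph $G_L$ on $V = \so(n) \times (\R^n/L)$, where $(A,x)$ and $(B,y)$ are adjacent whenever $(x+v+A\Kcal)^\circ \cap (y+B\Kcal)^\circ \neq \emptyset$ for some $v \in L$. With the weight $w(A,x) = \vol \Kcal$, the maximum density of a periodic packing with periodicity lattice $L$ equals $\alpha_w(G_L)/\vol(\R^n/L)$, which Theorem~\ref{thm:theta-gen} bounds by $\vartheta'_w(G_L)/\vol(\R^n/L)$.

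\textbf{Building the kernel from $f$.} The heart of the argument is to distill a feasible pair $(M_L, K_L)$ for~\eqref{eq:theta-gen} out of $f$. The natural candidate is the $L$-periodization in the translation variable,
\[
K_L\bigl((A,x),(B,y)\bigr) = \vol(\R^n/L) \sum_{v \in L} f\bigl((B,y)^{-1}(A,x+v)\bigr),
\]
which is well-defined and continuous because $f$ is rapidly decreasing, and descends to $V \times V$ by a shift-of-summation argument. Using the identity $(B,y)^{-1}(A,x+v) = (B^{-1}A, B^{-1}(x-y+v))$ together with the equivalence $(x+v+A\Kcal)^\circ \cap (y+B\Kcal)^\circ = \emptyset \iff (B^{-1}(x-y+v) + B^{-1}A \Kcal)^\circ \cap \Kcal^\circ = \emptyset$, condition~(i) translates directly into $K_L \leq 0$ on non-edges of $G_L$. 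On the diagonal the nonzero lattice terms are $f(I, A^{-1}v)$, which are $\leq 0$ by the choice of $L$ and (i), leaving $K_L((A,x),(A,x)) \leq \vol(\R^n/L) \, f(I,0)$, so I take $M_L = \vol(\R^n/L) \, f(I,0)$.

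\textbf{Positivity and main obstacle.} The step I expect to be hardest is verifying that $K_L - W$ is a positive kernel, with $W((A,x),(B,y)) = \vol \Kcal$. Condition~(iii) should yield positivity of the $K_L$-piece by lifting any test function $\rho \in L^2(V)$ to a compactly supported $\tilde\rho$ on $\motion(n)$ and unfolding the sum over $L$, producing
\[
\vol(\R^n/L) \int_{\motion(n)} \int_{\motion(n)} f\bigl((B,y)^{-1}(A,x)\bigr) \, \tilde\rho^{\mathrm{per}}(A,x) \, \overline{\tilde\rho(B,y)} \, d(A,x) \, d(B,y),
\]
which is non-negative directly from the definition of positive type. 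Condition~(ii) then controls the zero-frequency contribution of $K_L$ along the $\R^n/L$-directions (a Poisson-summation argument in the translation variable) and absorbs the constant kernel $W$. The genuine obstacle is that, unlike in Theorems~\ref{thm:symmetryreduction} and~\ref{thm:trans-pack}, one cannot diagonalize $K_L$ against one-dimensional characters: the Plancherel decomposition of $\motion(n)$ is noncommutative and its irreducibles are indexed by $\so(n-1)$-orbits on spheres in $\widehat{\R^n}$. I would therefore sidestep an explicit spectral decomposition and argue from the definition of positive type via the folding above. Once positivity is in hand, $(M_L, K_L)$ is feasible for~\eqref{eq:theta-gen}, so any periodic packing with lattice $L$ has density at most $f(I,0)$; passing to a sequence of lattices with $\vol(\R^n/L) \to \infty$ along an approximating sequence of periodic packings completes the proof.
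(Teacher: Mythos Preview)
Your proposal is correct and matches the paper's own treatment: the paper does not give a full proof of Theorem~\ref{thm:cong-pack} but simply remarks that ``the proof \ldots\ is slightly more technical than the proof of Theorem~\ref{thm:trans-pack}, but otherwise it follows the same pattern,'' and your outline is precisely that pattern carried out---compactify via a lattice~$L$, periodize~$f$ in the translation variable to build~$K_L$, read off the non-edge and diagonal constraints from condition~(i), and use~(ii)--(iii) for positivity of~$K_L - W$.

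The one place where you make a choice the paper leaves open is the positivity step. In the proof of Theorem~\ref{thm:trans-pack} the paper exhibits explicit eigenfunctions $a_{k,u}\otimes\chi_u$ to diagonalize $K_L - W$, while noting that one could alternatively argue ``directly with a bit of work by combining the definition of a positive kernel with that of a function of positive type.'' You opt for this direct route, which is sensible here: as you observe, the unitary dual of~$\motion(n)$ is noncommutative and its irreducibles are infinite-dimensional, so an explicit eigenfunction argument would require substantially more harmonic-analytic machinery than in the translational case. Your unfolding/Poisson-summation sketch is the right shape for the direct argument; filling it in carefully (in particular, isolating the constant-function contribution on $\R^n/L$ to match against~$W$ via condition~(ii)) is exactly the ``slightly more technical'' part the paper alludes to.
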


The proof of Theorem~\ref{thm:cong-pack} is slightly more technical
than the proof of Theorem~\ref{thm:trans-pack}, but otherwise it
follows the same pattern.

Notice that the congruent packing graph is a Cayley graph whose vertex
set is the Euclidean motion group. So Theorem~\ref{thm:cong-pack} is
also an analogue of Theorem~\ref{thm:symmetryreduction}. It is,
however, more distant from Theorem~\ref{thm:symmetryreduction} than
Theorem~\ref{thm:trans-pack} is, since~$\R^n$ is Abelian,
but~$\motion(n)$ is not. This means that when one does harmonic
analysis over~$\motion(n)$, using the characters is not enough: one
also needs to consider higher-dimensional irreducible
representations, most of them are even infinite-dimensional. 

Though it is clear that Theorem~\ref{thm:cong-pack} can be restated in
the Fourier domain just like Theorem~\ref{thm:symmetryreduction}
could, it now becomes harder to carry out this procedure
explicitly~---~already for~$n = 2$ or~$3$, the formulas involved are
significantly more complicated than the ones for~$\R^n$. Using the
formulas effectively in a computational approach remains the main
obstacle in applying Theorem~\ref{thm:cong-pack}

\section{A computational approach}
\label{sec:explicit computations}

Theorem~\ref{thm:trans-pack} and Theorem~\ref{thm:cong-pack} might be
mathematically pleasing \textit{per se} but the real challenge is to
determine explicit functions giving good bounds. So far this has been
done only for a few cases. When~$N = 1$, Theorem~\ref{thm:trans-pack}
becomes a theorem of Cohn and Elkies~\cite{Cohn2003a}. The Cohn-Elkies
bound provides the basic framework for proving the best known upper
bounds for the maximum density of sphere packings in dimensions~$4$,
\dots,~$36$. It is also conjectured to provide tight bounds in
dimensions~$8$ and~$24$ and there is strong numerical evidence to
support this conjecture. De Laat, Oliveira, and
Vallentin~\cite{Laat2014a} have proposed a strengthening of the
Cohn-Elkies bound and computed better upper bounds for the maximum
density of sphere packings in dimensions~$4$, $5$, $6$, $7$, and~$9$.

Here we want to give an idea of how to set up a semidefinite program
for finding good functions. Let~$B_n$ be the unit ball in~$\R^n$. To
find bounds for the density of a sphere-packing, we want to find a
function $f\colon \mathbb{R}^n \to \mathbb{R}$ with $f(0)$ as small as
possible such that

\begin{enumerate}
\item[(i)] $f(x) \leq 0$ whenever $(x + B_n)^\circ \cap B_n^\circ =
  \emptyset$;

\item[(ii)] $\hat{f}(0) - \vol B_n \geq 0$;

\item[(iii)] $f$ is of positive type, which means that $\hat{f}(u)$ is
  nonnegative for all $u \in \mathbb{R}^n$.
\end{enumerate}

Without loss of generality we can assume that the function $f$ is even
and radial, i.e., $f(x)$ depends only on the norm of~$x$, so it is
essentially an even univariate function.  Another good thing is that
the Fourier transform of a radial function is radial again. Functions
whose Fourier transform have the form
\[
\hat{f}(u) = p(\|u\|) e^{-\pi \|u\|^2}, \quad \text{where $p$ is an
  even and univariate polynomial,}
\]
are dense in the space of rapidly decreasing even and radial
functions. Then by the Fourier inversion formula we can compute $f$
explicitly, monomial by monomial, through
\[
\int_{\R^n} \|u\|^{2k} e^{-\pi\|u\|^2} e^{2\pi i u \cdot x}\, du =
k!\, \pi^{-k} e^{-\pi \|x\|^2} L_k^{n/2-1}(\pi \|x\|^2),
\]
where~$L_k^{n/2-1}$ is the Laguerre polynomial of degree~$k$ with
parameter~$n/2 - 1$. These are orthogonal polynomials on the half open
interval $[0,\infty)$ with respect to the measure $x^{n/2-1} e^{-x}\,
dx$.

We specify function~$f$ via the polynomial~$p$. To do so, we fix~$d > 0$
and work with polynomials of degree up to~$2d$, that is, we work with
polynomials of the form
\[
p(t) = \sum_{k=0}^d a_{2k} t^{2k}.
\]
Working with finite~$d$ is our way of discretizing the Fourier domain,
a necessary step as we observed in Section~\ref{sec:harmonic analysis}.

Now constraints on~$f$ become constraints on~$p$, which can be
modeled as sum-of-squares constraints (see, e.g., the expository
papers of Lasserre and Parrilo in \textit{SIAG/OPT
  Views-and-News}~15~(2004)).  So we can set up a semidefinite
programming problem to find a function~$f$ satisfying the required
constraints:
\[
\begin{array}{rll}
\min&\sum_{k=0}^d a_k k!\, \pi^{-k} L_k^{n/2-1}(0)\\[4pt]
&p(t) = \sum_{k=0}^d a_{2k} t^{2k},\\[4pt]
&\sum_{k=0}^d a_k k!\, \pi^{-k} L_k^{n/2-1}(\pi w^2)\\[4pt]
&\qquad{} + v_d^\tp(w) R
v_d(w) + (w^2 - 2^2) v_{d-2}^\tp(w) S v_{d-2}(w) = 0,\\[4pt]
&p(0) - \vol B_n \geq 0,\\[4pt]
&p(t) = v_d^\tp(t) Q v_d(t),\\[4pt]
&\text{$Q$, $R$, $S$ are positive semidefinite matrices,}
\end{array}
\]
where $v_d(z) = (1, z, \ldots, z^d)$ is the vector of all monomials up
to degree~$d$.

From a numerical perspective this formulation is a
catastrophe --- a fact well known to specialists in the field --- since
the monomial basis is used.  Even though the resulting semidefinite
program is small, say when we use $d = 10$, it is impossible to get a
solution from standard numerical solvers.  On the other hand there
are many equivalent ways to implement this program by using different
choices of polynomial bases. Here we have two choices: one for the
vectors~$v_d$ and one for testing the polynomial identities. With
quite some experimentation we found that the basis
\[
P_k(t) = \mu_k^{-1} L_k^{n/2-1}(2 \pi t),
\]
where~$\mu_k$ is the absolute value of the coefficient
of~$L_k^{n/2-1}(2 \pi t)$ with largest absolute value, performs well.

We believe that the problem of finding a good basis deserves further
investigation. Currently almost nothing (to the best of our knowledge
only the papers by Löfberg and Parrilo~\cite{Loefberg2004a} and Roh
and Vandenberghe~\cite{Roh2006a} address this issue) is known about it
although it is a crucial factor for solving polynomial optimization
problems in practice.

Another use of Theorem~\ref{thm:trans-pack} is to provide bounds for
binary sphere packings. These are packings of balls of two different
sizes, i.e., we have~$N = 2$ and~$\Kcal_1$, $\Kcal_2$ are
balls. Binary sphere packings occur naturally in applications such as
materials science and chemistry. De Laat, Oliveira, and
Vallentin~\cite{Laat2014a} used Theorem~\ref{thm:trans-pack} to
compute upper bounds for the maximum densities of binary sphere
packings in dimensions~$2$, \dots,~$5$.

Recently, Oliveira and Vallentin~\cite{Oliveira2013a} used
Theorem~\ref{thm:cong-pack} to compute upper bounds for the densities
of pentagon packings. Here yet a new challenge arises: The Fourier
transform is no longer matrix-valued but takes infinite-dimensional
Hilbert-Schmidt kernels as values. Oliveira and Vallentin determined a
first upper bound ($0.98$ in comparison to the best known lower bound
of $0.92$) and the numerical result obtained gives hope that the
theorem will also be useful in the case of tetrahedra packings to meet
the challenge of Gravel, Elser and Kallus.

\section{Conclusion}

It is natural to consider optimization methods when dealing with
geometric packing problems, and we tried to show how well-known
methods from combinatorial optimization, namely the Lovász theta
number and its variants, can be extended so as to provide upper
bounds for the packing density. Such extensions provide a uniform
framework to deal with geometric packing problems.

For finite graphs, only in very specific cases does the Lovász theta
number provide tight bounds. The same happens for geometric packing
graphs: only in very few cases are the bounds coming from extensions
of the theta number tight; in most cases, such bounds are but a first
step in solving the problem.

The link made with combinatorial optimization techniques not only
allows us to provide a unified framework and to have access to
well-known optimization tools, it also points out to ways in which
such bounds can be strengthened. The obvious approach is to extend ideas
like the Lasserre hierarchy to geometric packing problems. Such
higher-order bounds can incorporate more sophisticated constraints
like those coming from the local interaction of more than two
vertices; in other words, we then deal with $k$-point correlation
functions and not only with $2$-point correlation functions.

Schrijver~\cite{Schrijver2005a} considered $3$-point correlation
functions for binary codes and Bachoc and Vallentin~\cite{Bachoc2008a}
used $3$-point correlation functions for packings of spherical caps
on the unit sphere. De Laat and Vallentin~\cite{Laat2013a} recently
showed that this approach has the (theoretical) potential to solve all
geometric packing problems. However, the price to pay is that the size
of the optimization problems involved grows fast.

The success of such techniques will depend on several factors, among
which: (i)~how to analyze the optimization problem without using a
computer, for instance to find asymptotic results, (ii)~how to
automatize the use of harmonic analysis, and (iii)~how to solve
semidefinite programs involving sums-of-squares constraints in an
efficient and numerically stable manner.

\end{document}